\newtheorem{theorem}{Theorem}[section]
\newtheorem{lemma}[theorem]{Lemma}
\newtheorem{corollary}[theorem]{Corollary}
\newtheorem{definition}[theorem]{Definition}
\newtheorem{remark}[theorem]{Remark}
\numberwithin{equation}{section}
\newcounter{minutes}\setcounter{minutes}{\time}
\newcounter{hours}\setcounter{hours}{\time}
\newcommand{\D}{{\mathbb D}}
\newcommand{\real}{{\operatorname{Re}\,}}
\newcommand{\ds}{\displaystyle}
\begin{document}

\bibliographystyle{amsplain}

\title[On a generalization of starlike functions]%
{On a generalization of starlike functions}

\def\thefootnote{}
\footnotetext{ \texttt{\tiny File:~\jobname .tex,
          printed: \number\day-\number\month-\number\year,
          \thehours.\ifnum\theminutes<10{0}\fi\theminutes}
} \makeatletter\def\thefootnote{\@arabic\c@footnote}\makeatother

\author{Sarita Agrawal}

\thanks{
Postdoctoral Fellow,
Institute of Mathematical Sciences (IMSc) Chennai, IV Cross Road, CIT Campus, Taramani, Chennai 600 113, India\\
{\em Email: saritamath44@gmail.com}\\
}
\begin{abstract}
In this paper, we  mainly study the order of $q$-starlikeness of the well-known basic hypergeometric function. In addition, we obtain the Bieberbach-type problem for a generalized class of starlike functions. We also discuss the Fekete-szeg\"o and the Hankel determinant problems for the same class of functions.

\smallskip
\noindent
{\bf 2010 Mathematics Subject Classification}. 28A25; 30C45; 30C50; 33B10.

\smallskip
\noindent
{\bf Key words and phrases.} 
Basic hypergeometric function, order of starlikeness, $q$-difference operator, Bieberbach's conjecture, the Fekete-szeg\"o problem, Hankel determinant.
  
\end{abstract}

\maketitle
\pagestyle{myheadings}
\markboth{Sarita Agrawal}{On a generalization of starlike functions}

\section{Introduction}\label{Intro+mainthms}
Without loss of generality, the study of univalent functions has been restricted to the open unit disk $\mathbb{D}=\{z\in\mathbb{C}:\,|z|<1\}$. 
The subclasses of the class of univalent functions such as the classes of convex, starlike, and close-to-convex 
functions have been extensively investigated in the literature by numerous researchers in this field of study. We refer \cite{Dur83,Goo83} for the basic results on these classes.

We denote by $\mathcal{H}(\D)$, the set of all functions analytic (or holomorphic) in $\D$. We use the symbol $\mathcal{A}$ for the class of functions 
$f \in \mathcal{H}(\D)$ with 
the Montel normalization $f(0)=0=f'(0)-1$. This means that the functions 
$f\in\mathcal{A}$ have the power series representation of the form 
\begin{equation}\label{e1}
f(z)=z+\sum_{n=2}^\infty a_nz^n
\end{equation}
where $a_n$'s are complex numbers. Let $\mathcal{S}$ denote the class of all univalent functions in $\mathcal{A}$. 

A function $f\in \mathcal{A}$ is said to be starlike of order $\alpha$, $0\le \alpha<1$, if 
\begin{equation}\label{starlike-eq}
{\rm Re}\,\left(\frac{zf'(z)}{f(z)}\right)>\alpha,\quad z\in \mathbb{D}.
\end{equation}
We use the notation $\mathcal{S}^*(\alpha)$ for the class of all starlike functions of order
$\alpha$. This is immediate that for $\alpha=0$, this class coincides with the class $\mathcal{S}^*$ of all starlike functions. 
 
In \cite{AS17}, the authors generalize the class $\mathcal{S}^*(\alpha)$ by replacing the derivative in \eqref{starlike-eq} by the $q$-derivative $D_q(f)$ (also called the $q$-difference operator), defined for $0<q<1$ by
the equation
\begin{equation}\label{sec1-eqn1}
(D_qf)(z)=\frac{f(z)-f(qz)}{z(1-q)},\quad z\neq 0, \quad (D_qf)(0)=f'(0)
\end{equation}
and the right half plane by a suitable domain. They defined that a function
$f\in \mathcal{S}^*_q(\alpha)$, $0\le \alpha<1$, if
\begin{equation}\label{def1}
\left |\frac{\ds\frac{zD_qf(z)}{f(z)}-\alpha}{1-\alpha}-\frac{1}{1-q}\right|\le \frac{1}{1-q}, \quad z\in \D.
\end{equation}
Set $\mathcal{S}_q^*:=\mathcal{S}_q^*(0)$. The class $\mathcal{S}_q^*$ is defined and studied by Ismail in \cite{IMS90}.  As $q\to1^-$, $\mathcal{S}_q^*(\alpha)$ is nothing but $\mathcal{S}^*(\alpha)$, $0\le \alpha<1$. Definition~\ref{def1} implies that ${\rm Re}\,((z(D_qf)(z)/f(z))-\alpha)/(1-\alpha)$ lies in a disk centred at $1/(1-q)$ with the radius $1/(1-q)$. This implies
$${\rm Re}\,\left(\ds\frac{\ds\frac{z(D_qf)(z)}{f(z)}-\alpha}{1-\alpha}\right)>0
$$
or,
\begin{equation}\label{e1.5}
{\rm Re}\,\left(\frac{z(D_qf)(z)}{f(z)}\right)>\alpha,\quad z\in \mathbb{D}.
\end{equation}
But (\ref{e1.5}) need not imply (\ref{def1}).
Hence, it is natural to consider the following class in the more general way.
\begin{definition}
A function $f\in \mathcal{A}$ is said to be in the class of $q$-starlike functions of order $\alpha$, denoted by $\mathcal{S}_q(\alpha),\, 0\le \alpha<1$, if
$${\rm Re}\,\left(\frac{z(D_qf)(z)}{f(z)}\right)>\alpha,\quad z\in \mathbb{D}.
$$
\end{definition}
Clearly, $\mathcal{S}_q(\alpha)\supset\mathcal{S}_q^*(\alpha)$ and as $q\to 1^-$, $\mathcal{S}_q(\alpha)=\mathcal{S}^*(\alpha)$. Set $\mathcal{S}_q:=\mathcal{S}_q(0)$ so that as $q\to 1^-$, $\mathcal{S}_q(0)=\mathcal{S}^*$.

In Section~\ref{sec2}, we establish a result on the order of $q$-starlikeness of shifted basic hypergeometric functions
$$z\Phi[a,b;c;q,z]=z\sum_{n=0}^\infty \frac{(a;q)_n(b;q)_n}{(c;q)_n(q;q)_n}z^n, \quad
z\in\D,
$$
where $(a;q)_n=(1-a)(1-aq)(1-aq^2)\cdots (1-aq^{n-1})$, $(a;q)_0=1$ with $0\le q<1$, $a,b,c$ are real parameters and $(c;q)_n\neq 0$. For the basic properties of Heine's hypergeometric functions the basic references are \cite{AAR99,Sla66}. 
Interestingly, the replacements of $a,b$ and $c$ by $q^a,q^b$ and $q^c$ respectively, then as ${q\to 1^-}$, the function $\Phi[q^a,q^b;q^c;q,z]$ tends to the well-known Gaussian hypergeometric functions
$$zF(a,b;c;z)=z\sum_{n=0}^\infty\frac{(a)_n(b)_n}{(c)_n(1)_n}z^n,\quad
z\in\D,
$$
where $a, b, c$ are real parameters, $(a)_0=1$, $(a)_n=a(a+1)\cdots(a+n-1)$ is the Pochhammer symbol and $c$ is neither $0$ nor a negative integer (except in special cases where $a=-m$ or $b=-m$ and $c=-p$ with $p>m$). 

\section{Order of $q$-starlikeness of basic hypergeometric functions}\label{sec2}
In \cite{Kus02}, K\"{u}stner investigated the order of starlikeness of the shifted Gaussian hypergeometric functions. In a generalized way he extended the notion of order of starlikeness of $f\in \mathcal{A}$ as
$$\sigma(f):=\inf_{z\in \mathbb{D}}{\rm Re}\,\left(\frac{zf'(z)}{f(z)}\right)\in [-\infty,1]
$$ 
and established that the order of starlikeness of shifted Gaussian hypergeometric functions $zF(a,b;c;z)$ is $-\infty$ under certain constraints on the real parameters $a, b, c$. Basic results on the order of starlikeness of the Gaussian hypergeometric functions can be found in \cite{HPV10,Kus02,MM90,Pon97,PV01,Sil93}.  

Now, let us define the order of $q$-starlikeness of $f\in\mathcal{A}$ as, 
$$\sigma_q(f)=\inf_{z\in \D} \real \left(\frac{z(D_qf)(z)}{f(z)}\right) \in [-\infty,1].
$$
Clearly, $\sigma_q(f)=1$ for $f(z)=z$. Note that $\lim_{q\to 1}\sigma_q(f)=\sigma(f)$.
  
Here our main aim is to consider the shifted basic 
hypergeometric functions and determine the order of $q$-starlikeness in the sense of K\"ustner under some conditions on $a, b, c$ and $q$ as follows:

\begin{theorem}\label{sec3-thm1}
Let $a,b,c$ be non-negative real numbers with $0<1-aq< 1-cq$ and $0<1-b< 1-c$. 
For $0<q<1$  and $r \in (0,1]$, 
the function $z\mapsto z\Phi[a,b;c;q,rz]$ has the order of $q$-starlikeness
$$\sigma_q(z\Phi[a,b;c;q,rz])=1+\rho q\frac{(1-a)(1-b)}{(1-c)(1-q)}\,\frac{\Phi[aq,bq;cq;q,\rho]}
{\Phi[a,b;c;q,\rho]}
$$
where 
$$ \rho=-r \mbox{ if } \frac{q(1-a)}{a(1-q)}=:s > 0 \mbox{ and } \rho=r \mbox{ if } s < 0.
$$
In particular, we have
$$1+\frac{s\rho}{1-\rho}\le \sigma_q(z\Phi[a,b;c;q,rz]) \le 1+\frac{\rho s(1-b)}{2(1-c)}.
$$
\end{theorem}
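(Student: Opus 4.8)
The plan is to reduce the infimum defining $\sigma_q$ to the evaluation of a single explicit analytic quotient at a real boundary point, and then to estimate that quotient. Writing $\phi(w):=\Phi[a,b;c;q,w]=\sum_{n\ge0}A_nw^n$ with $A_n=\frac{(a;q)_n(b;q)_n}{(c;q)_n(q;q)_n}$, and using $D_q(z^{n+1})=\frac{1-q^{n+1}}{1-q}\,z^n$, a direct term-by-term computation gives, with the substitution $w=rz$,
$$\frac{z(D_qf)(z)}{f(z)}=\frac{\ds\sum_{n\ge0}A_n\frac{1-q^{n+1}}{1-q}w^n}{\ds\sum_{n\ge0}A_nw^n}=:H(w),\qquad f(z)=z\Phi[a,b;c;q,rz].$$
The first key step is the $q$-contiguous relation $D_q\phi(w)=\frac{(1-a)(1-b)}{(1-c)(1-q)}\,\Phi[aq,bq;cq;q,w]$, which follows by shifting the index in the series for $D_q\phi$ and using $(a;q)_{n+1}=(1-a)(aq;q)_n$ together with $\frac{1-q^{n+1}}{(q;q)_{n+1}}=\frac{1}{(1-q)(q;q)_n}$. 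Combining this with $\phi(w)-\phi(qw)=(1-q)wD_q\phi(w)$ yields the clean identity $H(w)=1+qw\,\frac{(1-a)(1-b)}{(1-c)(1-q)}\,\frac{\Phi[aq,bq;cq;q,w]}{\Phi[a,b;c;q,w]}$. Since $w=rz$ ranges over the disk $|w|<r$ as $z$ ranges over $\D$, and since the right-hand side evaluated at $w=\rho$ is exactly the asserted value of $\sigma_q$, the whole theorem reduces to the extremal statement $\ds\inf_{|w|<r}\real H(w)=H(\rho)$.

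Next I would record the sign structure forced by the hypotheses. From $0<1-b<1-c$ and $0<1-aq<1-cq$ one reads off $c<b<1$ and $c<a<1/q$, whence $(q;q)_n,(b;q)_n,(c;q)_n>0$ for all $n$, while $(a;q)_n$ has the sign of $(1-a)^{[n\ge1]}$. Thus $A_n>0$ for all $n$ when $a<1$, and $A_0=1>0$ with $A_n<0$ for $n\ge1$ when $a>1$; this dichotomy is precisely $\operatorname{sign}(1-a)=\operatorname{sign}(s)$, explaining the case split $\rho=-r$ when $s>0$ and $\rho=r$ when $s<0$. I would then show $\phi$ is zero-free on $\overline{\mathbb{D}_r}$ (using the coefficient signs and $r\le1$, so that $f\in\mathcal{S}_q(\alpha)$ for a suitable $\alpha$ and in particular $f\ne0$ off the origin), so that $H$ is analytic on the closed subdisk, $\real H$ is harmonic, and by the minimum principle $\ds\inf_{|w|<r}\real H=\min_{|w|=r}\real H$.

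The crux is to locate this boundary minimum at the real point $w=\rho$. Because the $A_n$ are real, $\real H(re^{i\theta})$ is even in $\theta$, so it suffices to prove it is monotone on $(0,\pi)$; the direction of monotonicity, governed by $\operatorname{sign}(1-a)$, then selects the endpoint $\theta=\pi$ (giving $\rho=-r$) or $\theta=0$ (giving $\rho=r$). Concretely I would clear denominators via $\real H=\real(N(w)\overline{\phi(w)})/|\phi(w)|^2$, where $N(w)=\sum_nA_n\frac{1-q^{n+1}}{1-q}w^n$, and reduce the desired inequality $\real H(re^{i\theta})\ge H(\rho)$ to the nonnegativity for all $\theta$ of a cosine sum of the shape $\sum_{m,n}A_mA_n\bigl(\tfrac12([m+1]_q+[n+1]_q)-H(\rho)\bigr)r^{m+n}\cos((m-n)\theta)$, which vanishes at the extremal $\theta$. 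Establishing this sign-definiteness — equivalently, controlling the sign of $\operatorname{Im}(wH'(w))$ on the upper semicircle — is the main obstacle, since it is where the monotonicity $[n+1]_q\uparrow$ and the definite signs of the $A_n$ must be leveraged; I expect to prove it either by a direct trigonometric positivity argument or by invoking a Küstner-type lemma on logarithmic ($q$-)derivatives of power series with sign-definite coefficients.

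Finally, the two-sided estimate would follow by bounding the ratio $R:=\Phi[aq,bq;cq;q,\rho]/\Phi[a,b;c;q,\rho]$, since $\sigma_q-1=q\rho\frac{(1-a)(1-b)}{(1-c)(1-q)}R=\kappa\rho R$ with $\kappa\rho<0$ in both cases, so that a lower bound for $R$ yields the upper bound for $\sigma_q$ and vice versa. Retaining the leading Taylor behaviour of numerator and denominator at $w=\rho$ (comparison of the $n=0,1$ coefficients, which produces the factor $\tfrac{1-b}{2(1-c)}$) gives one side, while a crude geometric majorization of the series (producing the factor $\tfrac1{1-\rho}$) gives the other; the only subtlety here is bookkeeping the sign of $\rho$ consistently with the case split so that the inequalities $1+\tfrac{s\rho}{1-\rho}\le\sigma_q\le1+\tfrac{\rho s(1-b)}{2(1-c)}$ come out with the correct orientation.
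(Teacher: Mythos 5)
Your opening step is exactly the paper's: the contiguous relation $D_q\Phi[a,b;c;q,\cdot]=\frac{(1-a)(1-b)}{(1-c)(1-q)}\Phi[aq,bq;cq;q,\cdot]$ together with $zD_qf=f+qz^2D_q\Phi$ gives the identity (the paper's equation (2.2)) for $w=z(D_qf)(z)/f(z)$. But after that point your proposal has a genuine gap, and you name it yourself: the entire content of the theorem is the extremal statement $\inf_{|w|<r}\operatorname{Re}H(w)=H(\rho)$, and you leave it as ``the main obstacle,'' offering only two speculative strategies (a direct trigonometric positivity argument, or an unspecified K\"ustner-type lemma) without carrying either out. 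The paper closes precisely this gap by a different, essential ingredient that your proposal is missing: using a difference equation from Agrawal--Sahoo (2014), it rewrites
$$w=1-s+s\,\frac{\Phi[aq,b;c;q,z]}{\Phi[a,b;c;q,z]}=1-s+s\int_0^1\frac{d\mu(t)}{1-tz},$$
where $\mu$ is a \emph{non-negative} measure on $[0,1]$ (a Markov/$g$-fraction representation, the $q$-analogue of K\"ustner's continued-fraction argument). Once $w$ is an affine image of a Markov function, the localization is immediate: for each $t\in[0,1]$, $\operatorname{Re}\frac{1}{1-tz}$ on $|z|\le r$ is minimized at $z=-r$ and maximized at $z=r$, so the sign of $s$ dictates $\rho=\mp r$, and the same representation (together with the integral representation of $\Phi[aq,bq;cq;q,z]/\Phi[a,b;c;q,z]$ in Theorem~2.13 of that paper) yields both the lower bound $1+\frac{s\rho}{1-\rho}$ and the upper bound $1+\frac{\rho s(1-b)}{2(1-c)}$.

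Moreover, your fallback plan is unlikely to succeed as stated: sign-definiteness of the Taylor coefficients $A_n$ alone does not localize the minimum of a real part at a real boundary point. Already for a single series with positive coefficients this fails --- $\operatorname{Re}(1+w^2)$ on $|w|=r$ is minimized at $w=\pm ir$, not at $w=-r$ --- so a ``direct trigonometric positivity argument'' built only on the signs of $A_n$ and the monotonicity of $[n+1]_q$ cannot control the cosine sum you write down; what makes the minimum sit at a real point is the total positivity hidden in the Stieltjes/Markov structure of the ratio $\Phi[aq,b;c;q,z]/\Phi[a,b;c;q,z]$, which is exactly the imported result you would need to prove or cite. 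Until that representation (or an equivalent) is established, both the evaluation $\sigma_q=H(\rho)$ and the two-sided estimate remain unproven in your outline.
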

From this we can observe the following
\begin{remark}
The case $s<0$ with $r=1$ in Theorem~\ref{sec3-thm1} is considered in the limiting sense.
In this case, the lower bound $1+\displaystyle\frac{sr}{1-r}$ is equal to $-\infty$.
\end{remark}

\begin{proof}[\bf Proof of Theorem~\ref{sec3-thm1}]
Set $\Phi(z)=\Phi[a,b;c;q,z]$ and $f(z)=z \Phi(z)$. 
Now, by (\ref{sec1-eqn1}) we have
\begin{eqnarray*}
(D_qf)(z)&=&\frac{\Phi(z)-q\Phi(qz)}{1-q}\\
&=&\frac{\Phi(z)-q\Phi(z)+q\Phi(z)-q\Phi(qz)}{1-q}\\
&=&\frac{\Phi(z)(1-q)+q(\Phi(z)-\Phi(qz))}{1-q}\\
&=&\Phi(z)+zq(D_q \Phi)(z).
\end{eqnarray*}
Hence,
\begin{equation}\label{eqn0.2}
w=\frac{z(D_qf)(z)}{f(z)}=1+zq\frac{(1-a)(1-b)}{(1-c)(1-q)}\,\frac{\Phi[aq,bq;cq;q,z]}
{\Phi[a,b;c;q,z]},
\end{equation}
where the last equality holds by \cite[1.12(ii), pp. 27]{GR90}. Recall the difference equation stated in \cite{AS14}, which is equivalent to
$$\frac{\Phi[aq,bq;cq;q,z]}{\Phi[a,b;c;q,z]}=\frac{(1-c)}{a(1-b)z} \left[\frac{\Phi[aq,b;c;q,z]}{\Phi[a,b;c;q,z]}-1\right].
$$

Substituting this ratio in (\ref{eqn0.2}), we get
$$w = 1+s\left[\frac{\Phi[aq,b;c;q,z]}{\Phi[a,b;c;q,z]}-1\right]
= 1-s+s\frac{\Phi[aq,b;c;q,z]}{\Phi[a,b;c;q,z]},
$$
where $s$ is defined in the statement of our theorem with $q \in (0,1)$.
It follows from \cite{AS14} that $w$ has an integral representation
\begin{equation}\label{eqn0.3}
w=1-s+s\int_0^1 \frac{1}{1-tz}\mbox{d}\mu(t),
\end{equation}
with the non-negative real numbers $a,b,c$ satisfying the conditions 
$0\le 1-aq \le 1-cq$ and $0<1-b<1-c$.
Now, for $ s>0$, $r\in (0,1]$ and from equation (\ref{eqn0.3}) it follows that the minimum of 
$\real w$ for $|z|\le r$ is attained at the point $z=-r$ and that the minimum is 
$1-\ds \frac{rs}{(1+r)}$.
Secondly, for $s<0$, $r\in (0,1]$ and from equation (\ref{eqn0.3}), 
it follows that the minimum of $\real w$ for $|z|\le r$ is attained at 
the point $z=r$ and that the minimum is $1+\ds \frac{rs}{(1-r)}$.
This in combination with (\ref{eqn0.2}), yields the order of $q$-starlikeness of
$z\Phi[a,b;c;q,rz]$.

The upper estimate for ${\rm Re}\,w$
follows from (\ref{eqn0.2}) and an integral representation of the ratio 
${\Phi[aq,bq;cq;q,z]}/{\Phi[a,b;c;q,z]}$ obtained in \cite[Theorem~2.13]{AS14}. 
Hence, the conclusion of our theorem follows immediately.
\end{proof}

\begin{remark}
Making the substitutions $a\to q^a$, $b\to q^b$ and $c\to q^c$, and taking the limit
as $q\to 1^-$, we achieve the result of K\"ustner \cite[Theorem~1.1]{Kus02} as special case. 
\end{remark}

\begin{corollary}\label{cor1}
Let $q\in (0,1)$, $a, b, c$ be non-negative real numbers such that $1>a\ge b\ge c$ and $r\in (0,1]$. Then
$$z\varPhi[a,b;c;q,rz]\in \mathcal{S}_q\left(1-\ds \frac{rs}{1+r}\right),
$$
where $s=\ds\frac{q(1-a)}{a(1-q)}$.
\end{corollary}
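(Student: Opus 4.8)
The plan is to deduce Corollary~\ref{cor1} directly from Theorem~\ref{sec3-thm1}: under the stated hypotheses we land in the $s>0$ branch of the theorem, its lower estimate produces exactly the claimed order, and the only real work is upgrading the order estimate to genuine membership in $\mathcal{S}_q$.

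First I would check that $1>a\ge b\ge c\ge 0$ (with $a>0$, so that $s$ is defined) forces $s>0$ and verifies the parameter conditions of Theorem~\ref{sec3-thm1}. Since $0<a<1$ and $0<q<1$ we have $1-a>0$, $1-q>0$ and $a>0$, hence $s=q(1-a)/(a(1-q))>0$ and the theorem selects $\rho=-r$. From $a<1,q<1$ we get $0<1-aq$, and $a\ge c$ gives $1-aq\le 1-cq$, so $0<1-aq\le 1-cq$; from $b\le a<1$ we get $0<1-b$, and $b\ge c$ gives $1-b\le 1-c$, so $0<1-b\le 1-c$. These are precisely the conditions under which the integral representation (\ref{eqn0.3}) in the proof of Theorem~\ref{sec3-thm1} was derived. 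The borderline case $b=c$ (where $1-b<1-c$ degenerates) I would treat separately, using that $\Phi[a,b;b;q,z]$ collapses to the $q$-binomial series, or by a continuity argument letting $b\downarrow c$.

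With $s>0$ and $\rho=-r$, the ``in particular'' estimate of Theorem~\ref{sec3-thm1} reads
$$\sigma_q\big(z\Phi[a,b;c;q,rz]\big)\ \ge\ 1+\frac{s\rho}{1-\rho}\ =\ 1-\frac{rs}{1+r}\ =:\ \alpha,$$
and since $s>0$, $r\in(0,1]$ we have $rs/(1+r)>0$, so $\alpha<1$, placing the order in the admissible range. If one also insists on $\alpha\ge 0$ as in the definition of $\mathcal{S}_q(\alpha)$, this amounts to the extra requirement $s\le 1+1/r$, a constraint keeping $a$ away from $0$; otherwise the conclusion is read simply as the inequality $\real(z(D_qg)(z)/g(z))>\alpha$.

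The one non-formal step is to convert $\sigma_q(g)\ge\alpha$, with $g(z)=z\Phi[a,b;c;q,rz]$, into the strict inequality $\real\big(z(D_qg)(z)/g(z)\big)>\alpha$ on all of $\D$ that defines $\mathcal{S}_q(\alpha)$. Writing $f(\zeta)=\zeta\Phi[a,b;c;q,\zeta]$ and $w(\zeta)=\zeta(D_qf)(\zeta)/f(\zeta)$, the scaling identity $(D_qg)(z)=(D_qf)(rz)$ gives $z(D_qg)(z)/g(z)=w(rz)$, so as $z$ runs over $\D$ the variable $\zeta=rz$ runs over the open disk $|\zeta|<r$. In the proof of the theorem the infimum is approached at $\zeta=-r$, that is at $z=-1\in\partial\D$, which is not an interior point; since $\real w$ is harmonic and nonconstant on $|\zeta|<r$ (the degenerate case $w\equiv 1$ being trivially in the class), the strong minimum principle gives $\real w(\zeta)>\alpha$ at every $|\zeta|<r$. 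Hence the defining infimum of $\sigma_q(g)$ is not attained inside $\D$, and $\real(z(D_qg)(z)/g(z))>\alpha$ holds throughout $\D$, i.e.\ $g\in\mathcal{S}_q(\alpha)$. I expect this strictness argument, rather than the purely computational specialization, to be the only delicate point.
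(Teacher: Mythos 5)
Your proposal is correct and takes essentially the same route as the paper: the paper's proof of Corollary~\ref{cor1} simply observes that $0<a<1$ gives $s>0$ and invokes the $s>0$ case of Theorem~\ref{sec3-thm1}, whose lower estimate $1+s\rho/(1-\rho)$ with $\rho=-r$ is precisely $1-rs/(1+r)$. The additional care you take --- checking the parameter hypotheses (including the borderline case $b=c$, which the theorem's strict inequality $1-b<1-c$ excludes even though the corollary's $b\ge c$ permits it) and upgrading the non-strict bound $\sigma_q\ge 1-rs/(1+r)$ to the strict inequality required by the definition of $\mathcal{S}_q(\alpha)$ --- fills genuine small gaps that the paper's two-sentence proof passes over in silence.
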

\begin{proof}
The condition $0<a<1$ gives $s>0$. The case $s>0$ in Theorem~\ref{sec3-thm1} concludes the result, since the conditions on the parameters $a, b$, and $c$ in Theorem~\ref{sec3-thm1} and Corollary~\ref{cor1} are same, when $0<a<1$.
\end{proof}

On substituting of $r=1$ in Corollary~\ref{cor1} gives the order of $q$-starlikeness of the shifted basic hypergeometric functions with certain conditions on $a, b$, and $c$.

\begin{corollary}\label{cor2}
Let $a, b, c$ be non-negative real numbers such that $1>a\ge b\ge c$. Then
$$z\varPhi[a,b;c;q,z]\in \mathcal{S}_q(1-s/2),
$$
where $s=\ds\frac{q(1-a)}{a(1-q)}$.
\end{corollary}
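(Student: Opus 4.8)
The plan is to obtain Corollary~\ref{cor2} directly as the endpoint case $r=1$ of Corollary~\ref{cor1}, so that no fresh computation is needed beyond a specialization. First I would check that the hypotheses match verbatim: the assumption that $a,b,c$ are non-negative with $1>a\ge b\ge c$ is exactly the hypothesis of Corollary~\ref{cor1}, and the admissible range $r\in(0,1]$ in that corollary includes the endpoint $r=1$. Since $0<a<1$, the quantity $s=\ds\frac{q(1-a)}{a(1-q)}$ is strictly positive, so we remain in the branch $s>0$ of Theorem~\ref{sec3-thm1}, for which the extremal point is $\rho=-r$ and the order of $q$-starlikeness equals $1-\ds\frac{rs}{1+r}$.

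The second step is the substitution $r=1$ into this order. Evaluating $1-\ds\frac{rs}{1+r}$ at $r=1$ gives $1-\ds\frac{s}{2}$, which is precisely the asserted order; hence $z\varPhi[a,b;c;q,z]\in\mathcal{S}_q(1-s/2)$.

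The one point that really warrants care — and the only place where the endpoint $r=1$ differs qualitatively from $r<1$ — is that the extremal point $z=-r=-1$ now lies on the boundary $\partial\D$ rather than inside $\D$. I would therefore confirm that the value $\sigma_q(z\varPhi[a,b;c;q,z])=1-s/2$ is consistent with the strict inequality $\real\,w>1-s/2$ demanded on the open disk by the definition of $\mathcal{S}_q(1-s/2)$. This is immediate from the integral representation \eqref{eqn0.3}: for $s>0$, as established in the proof of Theorem~\ref{sec3-thm1}, the harmonic function $\real\,w$ attains its minimum over the closed disk $\overline{\D}$ at the boundary point $z=-1$, so by the minimum principle it is strictly larger than $1-s/2$ throughout $\D$, while $\inf_{z\in\D}\real\,w=1-s/2$. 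This is exactly the membership condition for $\mathcal{S}_q(1-s/2)$, and the corollary follows.
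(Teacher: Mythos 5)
Your proof is correct and takes essentially the same route as the paper, which obtains Corollary~\ref{cor2} simply by substituting $r=1$ into Corollary~\ref{cor1}. Your extra verification that the endpoint case $r=1$ (where the extremal point $z=-1$ lies on $\partial\D$) remains consistent with the strict inequality required by the definition of $\mathcal{S}_q(1-s/2)$ is a point the paper passes over silently, but it is a refinement of, not a departure from, the paper's argument.
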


In particular, when $s=2$, the shifted basic hypergeometric function $z\varPhi[a,b;c;q,z]\in \mathcal{S}_q$. 

When we put $a=q^a,b=q^b$ and $c=q^c$ and allow $q\to 1^-$, then Corollary~\ref{cor2} leads to an well-known result on the order of starlikeness of the Gaussian hypergeometric functions namely,

\begin{corollary}\cite[Theorem~B]{RS86}\label{cl}
Let $a, b, c$ be non-negative real numbers such that $a\le b\le c$. Then
$$zF(a,b;c;z)\in \mathcal{S}^*(1-a/2).
$$
\end{corollary}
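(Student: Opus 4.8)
The plan is to obtain Corollary~\ref{cl} as the $q\to 1^-$ limit of Corollary~\ref{cor2} under the parameter substitution $a\to q^a$, $b\to q^b$, $c\to q^c$, exactly as announced before the statement. First I would check that the hypotheses translate correctly. Since $0<q<1$, the map $t\mapsto q^t$ is strictly decreasing, so the chain $1>q^a\ge q^b\ge q^c$ required by Corollary~\ref{cor2} is equivalent to $0<a\le b\le c$, while the non-negativity of $q^a,q^b,q^c$ is automatic. Thus for every $q\in(0,1)$ the substituted hypotheses of Corollary~\ref{cor2} hold precisely when $a\le b\le c$ with $a>0$, which is the hypothesis of Corollary~\ref{cl}. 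The degenerate case $a=0$ is trivial, since then $zF(0,b;c;z)=z$ and $1-a/2=1$.

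The key computation is the limit of the order parameter. Under the substitution, the quantity $s$ of Corollary~\ref{cor2} becomes
$$s=\frac{q(1-q^a)}{q^a(1-q)}.$$
Using the elementary limit $\lim_{q\to 1^-}\frac{1-q^a}{1-q}=a$ (apply L'Hôpital in the variable $q$, or note that this is the classical degeneration of the $q$-number to $a$) together with $\lim_{q\to 1^-}q/q^a=1$, I obtain $\lim_{q\to 1^-}s=a$, and therefore the order of $q$-starlikeness $1-s/2$ tends to $1-a/2$.

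It remains to pass the membership $z\Phi[q^a,q^b;q^c;q,z]\in\mathcal{S}_q(1-s/2)$ to the limit. By the convergence recorded in the Introduction, $\Phi[q^a,q^b;q^c;q,z]\to F(a,b;c;z)$ as $q\to 1^-$, so the function under consideration, $f_q(z)=z\Phi[q^a,q^b;q^c;q,z]$, converges to $f(z)=zF(a,b;c;z)$, and the class $\mathcal{S}_q(\alpha)$ degenerates to $\mathcal{S}^*(\alpha)$. Since $\lim_{q\to 1^-}\sigma_q=\sigma$ and $1-s/2$ is a lower bound for $\sigma_q(f_q)$ by the lower estimate in Theorem~\ref{sec3-thm1}, letting $q\to 1^-$ yields $\sigma(f)\ge 1-a/2$, i.e.\ $zF(a,b;c;z)\in\mathcal{S}^*(1-a/2)$. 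The main obstacle is the interchange of the limit $q\to 1^-$ with the infimum over $\mathbb{D}$ that defines the order: I would justify it through the locally uniform convergence $f_q\to f$ and $(D_qf_q)\to f'$ on compact subsets of $\mathbb{D}$, so that $z(D_qf_q)/f_q\to zf'/f$ locally uniformly, the denominators being nonvanishing on $\mathbb{D}\setminus\{0\}$ by the very definition of the classes. A secondary point to treat with care is that the limit of the strict inequalities gives only $\ge 1-a/2$; this still yields membership in $\mathcal{S}^*(1-a/2)$, since the infimum over the open disk equalling $1-a/2$ is compatible with strict inequality at each interior point.
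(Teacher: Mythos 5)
Your proposal is correct and takes exactly the paper's route: the paper obtains this corollary in a single sentence by substituting $a\to q^a$, $b\to q^b$, $c\to q^c$ into Corollary~\ref{cor2} and letting $q\to 1^-$, and your write-up is that same argument with the details (hypothesis translation, $s\to a$, locally uniform convergence of $z(D_qf_q)/f_q$ to $zf'/f$) made explicit, which the paper omits entirely. The one step I would tighten is the final strictness point: to pass from the limiting inequality $\operatorname{Re}(zf'(z)/f(z))\ge 1-a/2$ to the strict inequality defining $\mathcal{S}^*(1-a/2)$, invoke the minimum principle for the harmonic function $\operatorname{Re}(zf'/f)$, which equals $1>1-a/2$ at $z=0$ and hence, being non-constant, can never attain the value $1-a/2$ in $\mathbb{D}$ --- mere ``compatibility'' with strict inequality is an observation, not a proof.
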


This result on Gaussian hypergeometric function of Corollary~\ref{cl} is not only interesting by itself, but also useful and employed for further research in geometric function theory. Many researchers used this result, particularly Ponnusamy and Sahoo in \cite{PS08} used it innovatively to study pre-Schwarzian norm estimates for integral operators (defined by convolution) of functions belonging to special subclasses of the class of univalent functions with hypergeometric functions. Similarly, we expect that these results on basic hypergeometric functions will be fruitful and pave the way for further research in function theory as well as in physics. 

\section{Coefficient estimates}
In geometric function theory, finding bound for the coefficient $a_n$ of functions of the form (\ref{e1}) is an important
problem, as it reveals the geometric properties of the corresponding function. For
example, the bound for the second coefficient $a_2$ of functions in the class $\mathcal{S}$, gives the growth and distortion properties as well as covering theorems. Estimating coefficient of functions from the class of univalent functions and its subclasses is of interest among function theorists since last decade; see for instance \cite{AS17,deB85}. 

Another interesting coefficient estimation is the Hankel determinant. The $k^{th}$ order Hankel determinant ($k\ge 1$) of $f\in 
\mathcal{A}$ is defined by
$$H_k(n)=\left|\begin{array}{ccc}
a_n & a_{n+1} \cdots & a_{n+k-1}\\
a_{n+1} & \cdots & a_{n+k}\\
\vdots & \vdots &\vdots\\
a_{n+k-1} & \cdots & a_{n+2k-2}  
\end{array}
\right|.
$$
For our discussion, in this paper, we consider the Hankel determinant $H_2(1)$ (also called the Fekete-Szeg\"o functional) and $H_2(2)$.
As early as 1916, Bieberbach himself established that if $f\in\mathcal{S}$, then $|a_2^2-a_3|\le 1$. In 1933, Fekete and Szeg\"o in \cite{FS33} proved that
$$|a_3-\mu a_2^2|\le \left \{ \begin{array}{ll}
4\mu-3 & \mbox{if } \mu \ge 1\\
1+2\exp [-2\mu/(1-\mu)] & \mbox{if } 0\le \mu \le 1\\
3-4\mu & \mbox{if } \mu \le 0
\end{array}\right. .
$$
The result is sharp in the sense that for each $\mu$ there is a function in
the class under consideration for which equality holds. 
The coefficient functional $a_3-\mu a_2^2$ has many applications in function theory. For example,
the functional $a_3-a_2^2$ is equal to $S_f(z)/6$, where $S_f(z)$ is the Schwarzian derivative 
of the locally univalent function $f$ defined by
$S_f(z)=(f''(z)/f'(z))'-(1/2)(f''(z)/f'(z))^2$.
Finding the maximum value of the functional $a_3-\mu a_2^2$ is called the
{\em Fekete-Szeg\"o problem}.
Koepf solved the Fekete-Szeg\"o problem for close-to-convex functions and obtains the largest real number $\mu$ for which $a_3-\mu a_2^2$ is maximized by the Koebe function $z/(1-z)^2$ is $\mu=1/3$ (see \cite{Koe87}). Later, in \cite{Koe87-II} (see also \cite{Lon93}), this result
was generalized for functions that are close-to-convex of order $\beta$, $\beta\ge 0$. In \cite{Pfl85}, Pfluger employed the variational method in dealing the Fekete-Szeg\"o inequality which includes a description of the image domains under extremal functions. Later, Pfluger \cite{Pfl86} used Jenkin’s method to show that
for $f\in \mathcal{S}$,
$$|a_3-\mu a_2^2|\le 1+2|\exp(-2\mu/(1-\mu))|
$$
holds for complex $\mu$ such that $\real(1/(1-\mu))\ge 1$. The inequality is sharp if and only if $\mu$
is in a certain pear shaped subregion of the disk given by
$$\mu =1-(u+itv)/u^2+v^2, \quad -1\le t\le 1,
$$ 
where $u=1-\log(\cos \varphi)$ and $v=\tan \varphi- \varphi , 0<\varphi <\pi/2$.

This section covers the Bieberbach-type problem, the Fekete-Szeg\"o problem and the Hankel determinant of functions belonging to the class $\mathcal{S}_q$. This can be extended in the similar way for the functions belonging to the class $\mathcal{S}_q(\alpha)$.
Note that the Bieberbach-type problem for the classes $\mathcal{S}_q^*$ and $\mathcal{S}_q^*(\alpha)$ 
are respectively established in the papers \cite{IMS90} and \cite{AS17}.

The following lemmas are useful for the proof of the Fekete-Szeg\"o problem and finding the Hankel determinant.

Let $\mathcal{P}$ be the family of all functions $p\in \mathcal{H}(\D)$ 
for which $\real \{p(z)\}\ge 0$ and
\begin{equation}\label{e6}
p(z)=1+p_1z+p_2z^2+\ldots
\end{equation}
for $z\in\D$.

\begin{lemma}\label{l4}\cite[pp.~254-256]{LZ83}
Let the function $p\in\mathcal{P}$ and be given by the power series (\ref{e6}). Then
$$
2p_2=p_1^2+x(4-p_1^2),
$$
$$4p_3=p_1^3+2(4-p_1^2)p_1x-p_1(4-p_1^2)x^2+2(4-p_1^2)(1-|x|^2)z,
$$
for some $x$ and $z$ satisfying $|x|\le 1$, $|z|\le 1$, and $p_1\in [0,2]$.
\end{lemma}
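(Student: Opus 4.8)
The plan is to pass from $p\in\mathcal{P}$ to its associated Schwarz function and then read off the two parameters $x,z$ from successive applications of the Schwarz--Pick inequality. Since $\real p\ge 0$ and $p(0)=1$, the Cayley transform $\omega(z)=(p(z)-1)/(p(z)+1)$ maps $\D$ into $\overline{\D}$ and fixes the origin, so $\omega$ is a Schwarz function, $\omega(z)=c_1z+c_2z^2+c_3z^3+\cdots$. First I would expand this quotient as a power series and match coefficients with (\ref{e6}); a short computation gives $c_1=p_1/2$, $\ c_2=p_2/2-p_1^2/4$, and $c_3=p_3/2-p_1p_2/2+p_1^3/8$. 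Inverting these relations expresses $p_1,p_2,p_3$ in terms of $c_1,c_2,c_3$, so the whole problem is reduced to parametrizing the first three Taylor coefficients of a Schwarz function.

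The next step is the Schur algorithm applied to $\omega$. Writing $\omega(z)=z\psi(z)$ with $\psi:\D\to\overline{\D}$ and $\psi(0)=c_1$, I would (in the generic case $|c_1|<1$) form the Schur iterate
$$\psi_1(z)=\frac{1}{z}\cdot\frac{\psi(z)-c_1}{1-\overline{c_1}\,\psi(z)},$$
which again maps $\D$ into $\overline{\D}$. Expanding $\psi_1$ in powers of $z$ yields $\psi_1(0)=c_2/(1-|c_1|^2)$; since $|\psi_1(0)|\le 1$, this produces the first free parameter $x$ through $c_2=(1-|c_1|^2)x$ with $|x|\le 1$. The Schwarz--Pick inequality applied to $\psi_1$ at the origin, namely $|\psi_1'(0)|\le 1-|\psi_1(0)|^2=1-|x|^2$, then furnishes the second free parameter $z$ (the one appearing in the statement), and unwinding the definition of $\psi_1$ expresses the third coefficient as $c_3=(1-|c_1|^2)\bigl[(1-|x|^2)z-\overline{c_1}\,x^2\bigr]$.

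Finally I would substitute back. Using the rotational freedom $p(w)\mapsto p(\eta w)$ with $|\eta|=1$, one may assume $p_1\in[0,2]$, so that $c_1=p_1/2$ is real and $1-|c_1|^2=(4-p_1^2)/4$. The relation $c_2=(1-|c_1|^2)x$ immediately gives $2p_2=p_1^2+(4-p_1^2)x$, and inserting the formula for $c_3$ into $p_3=2c_3+p_1p_2-p_1^3/4$ and collecting terms reproduces the stated expression for $4p_3$. The main obstacle is the second Schur step: one must carry the expansion of $\psi_1$ to first order accurately, retaining the cross term $\overline{c_1}\,c_2^2/(1-|c_1|^2)$, so that the factor $1-|x|^2$ multiplying $z$ emerges correctly---this factor encodes the degenerate case $|x|=1$, in which $\omega$ is a finite Blaschke product and the parameter $z$ becomes inactive. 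The boundary cases $|c_1|=1$ and $|x|=1$ would be treated separately or recovered by a limiting argument, and both are consistent with the displayed parametrization.
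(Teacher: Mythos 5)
Your proof is correct, but it cannot be compared with an internal argument because the paper never proves Lemma~\ref{l4}: the lemma is quoted verbatim from Libera--Zlotkiewicz \cite{LZ83}, so the only meaningful comparison is with that source. The original argument of Libera and Zlotkiewicz derives both formulas from the Carath\'eodory--Toeplitz description of the coefficient body of $\mathcal{P}$ (non-negativity of the associated Toeplitz determinants), whereas you work directly with the Schwarz function $\omega=(p-1)/(p+1)$ and run one step of the Schur algorithm. Your computations check out: $c_1=p_1/2$, $c_2=p_2/2-p_1^2/4$, $c_3=p_3/2-p_1p_2/2+p_1^3/8$; the Schur iterate satisfies $\psi_1(0)=c_2/(1-|c_1|^2)=:x$ and $\psi_1'(0)=\bigl(c_3+\overline{c_1}\,c_2^2/(1-|c_1|^2)\bigr)/(1-|c_1|^2)$, Schwarz--Pick writes the latter as $(1-|x|^2)z$ with $|z|\le 1$, and back-substitution with $c_1=p_1/2\in[0,1]$ real reproduces both displayed identities exactly (I verified the algebra, including the inversion $p_3=2c_3+p_1p_2-p_1^3/4$). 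Two points that you defer to ``a limiting argument'' are in fact immediate and should be said outright: if $|c_1|=1$ or $|x|=1$, the relevant Schur function attains modulus $1$ at an interior point, hence is a unimodular constant by the maximum principle, so the next coefficient vanishes and the formulas hold with the remaining free parameter arbitrary; the same observation disposes of the caveat that Schwarz--Pick is usually stated for maps into the open disk rather than $\overline{\D}$. What your route buys is self-containedness --- nothing beyond the Schwarz lemma is used --- while the cited proof presupposes the Toeplitz-determinant characterization of $\mathcal{P}$; the price is the slightly fussy bookkeeping of the cross term $\overline{c_1}\,c_2^2/(1-|c_1|^2)$, which you correctly identified as the step where the factor $1-|x|^2$ is created.
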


\begin{lemma}\label{l5}\cite[Lemma~1]{MM92}
Let the function $p\in\mathcal{P}$ and be given by the power series (\ref{e6}). Then for any real number $\lambda$,
$$
|p_2-\lambda p_1^2|\le 2 \max\{1, |2\lambda-1|\}
$$
and the result is sharp.
\end{lemma}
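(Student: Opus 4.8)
The plan is to reduce the statement to the normalized situation covered by Lemma~\ref{l4} and then to optimize an elementary linear expression. First I would record that the quantity $|p_2-\lambda p_1^2|$ is invariant under the rotation $p(z)\mapsto p(e^{i\theta}z)$: this map sends $\mathcal{P}$ to itself and replaces $(p_1,p_2)$ by $(e^{i\theta}p_1,\,e^{2i\theta}p_2)$, so that $p_2-\lambda p_1^2$ is multiplied by $e^{2i\theta}$ and its modulus is unchanged (here it is essential that $\lambda$ is real). Choosing $\theta$ so that $e^{i\theta}p_1\ge 0$, I may therefore assume $p_1\in[0,2]$, which is exactly the normalization under which Lemma~\ref{l4} applies.

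Next I would substitute the representation $2p_2=p_1^2+x(4-p_1^2)$ with $|x|\le 1$ from Lemma~\ref{l4}. Writing
\[
p_2-\lambda p_1^2=\Big(\tfrac12-\lambda\Big)p_1^2+\tfrac12\,x\,(4-p_1^2),
\]
the triangle inequality together with $|x|\le 1$ and $0\le p_1\le 2$ gives
\[
|p_2-\lambda p_1^2|\le \Big|\tfrac12-\lambda\Big|\,p_1^2+\tfrac12\,(4-p_1^2).
\]
Setting $t=p_1^2\in[0,4]$, the right-hand side becomes the affine function $g(t)=2+\big(|\tfrac12-\lambda|-\tfrac12\big)t$.

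Since $g$ is linear in $t$, its maximum over $[0,4]$ is attained at an endpoint: $g(0)=2$ and $g(4)=4\,|\tfrac12-\lambda|=2|2\lambda-1|$. Hence $|p_2-\lambda p_1^2|\le\max\{2,\,2|2\lambda-1|\}=2\max\{1,|2\lambda-1|\}$, which is the asserted bound. For sharpness I would exhibit the two extremal functions. The choice $p(z)=(1+z^2)/(1-z^2)=1+2z^2+\cdots$ has $p_1=0,\ p_2=2$ and realizes the value $2$ (relevant when $|2\lambda-1|\le 1$), while $p(z)=(1+z)/(1-z)=1+2z+2z^2+\cdots$ has $p_1=p_2=2$ and gives $|p_2-\lambda p_1^2|=|2-4\lambda|=2|2\lambda-1|$ (relevant when $|2\lambda-1|\ge 1$).

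I do not expect a genuine obstacle here: once Lemma~\ref{l4} is in hand, the estimate is a one-variable linear optimization. The only point that needs care is the reduction to real $p_1$, since Lemma~\ref{l4} is stated only for $p_1\in[0,2]$; the rotation argument above handles this, and it is precisely where the hypothesis that $\lambda$ is real is used. Alternatively, one could bypass the normalization by invoking the classical inequality $|p_2-\tfrac12 p_1^2|\le 2-\tfrac12|p_1|^2$ directly, which yields the same affine bound in the variable $t=|p_1|^2$.
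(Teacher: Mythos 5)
Your proof is correct. Note, however, that the paper itself contains no proof of this statement: Lemma~\ref{l5} is simply quoted from Ma and Minda \cite[Lemma~1]{MM92}, so there is no internal argument to compare against. What you have done is reconstruct a self-contained derivation from Lemma~\ref{l4}: the rotation $p(z)\mapsto p(e^{i\theta}z)$ to normalize $p_1\in[0,2]$, the substitution $2p_2=p_1^2+x(4-p_1^2)$, the triangle inequality, and the observation that an affine function of $t=p_1^2\in[0,4]$ attains its maximum at an endpoint, with the two extremal functions $(1+z)/(1-z)$ and $(1+z^2)/(1-z^2)$ matching the endpoint values $2|2\lambda-1|$ and $2$. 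All of these steps are sound, and this is essentially the standard argument behind the Ma--Minda lemma.

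One correction worth making: your parenthetical claim that realness of $\lambda$ is \emph{essential} for the rotation step is wrong, in a direction that helps you. Under the rotation one has $p_2-\lambda p_1^2\mapsto e^{2i\theta}p_2-\lambda\bigl(e^{i\theta}p_1\bigr)^2=e^{2i\theta}\bigl(p_2-\lambda p_1^2\bigr)$ for \emph{any} complex $\lambda$, so the modulus is preserved without any reality assumption; likewise the triangle-inequality step $\bigl|(\tfrac12-\lambda)p_1^2+\tfrac12 x(4-p_1^2)\bigr|\le|\tfrac12-\lambda|\,p_1^2+\tfrac12(4-p_1^2)$ never uses that $\lambda$ is real. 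Hence your argument proves the inequality for all complex $\lambda$. This is actually important for the paper: in Theorem~\ref{T3} the lemma is applied with $\lambda=(\mu(1+q)-1)/q$ where $\mu$ is an arbitrary \emph{complex} number, which the real-$\lambda$ statement of Lemma~\ref{l5} does not literally cover. Your proof, once the erroneous parenthetical is deleted, closes that gap.
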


\begin{lemma}[Carath\'{e}dory lemma]\label{l6}
If a function $p(z)=1+\sum_{n=1}^\infty p_nz^n\in\mathcal{P}$, then $|p_n|\le 2, n=1,2, \ldots$. The result is sharp for
$$p(z)=\frac{1+z}{1-z}=1+\sum_{n=1}^\infty 2z^n.
$$ 
\end{lemma}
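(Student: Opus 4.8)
The plan is to recover each coefficient $p_n$ from the boundary values of $\real p$ on a circle of radius $r\in(0,1)$ and then invoke the hypothesis $\real p\ge 0$ directly, letting $r\to 1^-$ at the end; working on an inner circle is what circumvents the fact that positivity is only guaranteed in the open disk.

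First I would fix $r\in(0,1)$, write $z=re^{i\theta}$, and record the Fourier identity
$$p_n r^n=\frac{1}{\pi}\int_0^{2\pi}\real p(re^{i\theta})\,e^{-in\theta}\,d\theta,\qquad n\ge 1.$$
The justification rests on writing $\real p=\tfrac12(p+\bar p)$: after multiplying by $e^{-in\theta}$ with $n\ge 1$ and integrating over a full period, every Fourier mode of $\bar p$ (which carries only exponents $e^{-ik\theta}$ with $k\ge 0$) integrates to zero, so only the single term $p_n r^n$ survives. The same computation with $n=0$, together with the mean value property, gives $\frac{1}{\pi}\int_0^{2\pi}\real p(re^{i\theta})\,d\theta=2\,\real p(0)=2$.

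Next I would use non-negativity of the real part. Taking absolute values in the identity above and using $|e^{-in\theta}|=1$,
$$|p_n|\,r^n\le \frac{1}{\pi}\int_0^{2\pi}\real p(re^{i\theta})\,d\theta=2,$$
and letting $r\to 1^-$ yields $|p_n|\le 2$ for all $n\ge 1$. For sharpness I would exhibit $p(z)=(1+z)/(1-z)$, check that it maps $\D$ onto the right half-plane (hence $p\in\mathcal{P}$), and read off from its geometric expansion $1+2\sum_{n\ge 1}z^n$ that every coefficient equals $2$.

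The only genuine technical point is the careful verification of the coefficient--integral identity, in particular the vanishing of the conjugate contribution; once that is in place the bound is immediate from positivity. An alternative route would be to quote the Herglotz representation $p(z)=\int_0^{2\pi}\frac{1+e^{-it}z}{1-e^{-it}z}\,d\mu(t)$ for a probability measure $\mu$, from which $p_n=2\int_0^{2\pi}e^{-int}\,d\mu(t)$ and hence $|p_n|\le 2$ follow at once; I would nonetheless prefer the Fourier argument above, since it is entirely self-contained.
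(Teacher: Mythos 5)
Your proof is correct. Note, however, that the paper itself offers \emph{no} proof of this lemma: it is the classical Carath\'eodory coefficient lemma, stated there as a known auxiliary fact (alongside Lemmas~\ref{l4} and~\ref{l5}, which are quoted from the literature) and used only as an ingredient in Theorems~\ref{T2}--\ref{T4}. So there is nothing in the paper to compare against step by step; what you have supplied is the standard textbook argument, and it is sound. Your key identity
$$p_n r^n=\frac{1}{\pi}\int_0^{2\pi}\operatorname{Re} p(re^{i\theta})\,e^{-in\theta}\,d\theta ,\qquad n\ge 1,$$
is justified exactly as you say: writing $\operatorname{Re} p=\tfrac12(p+\bar p)$, the series for $\bar p$ on $|z|=r$ contributes only modes $e^{-ik\theta}$ with $k\ge 0$, so multiplying by $e^{-in\theta}$ with $n\ge 1$ and integrating kills it, while $p$ contributes the single term $p_nr^n$; the $n=0$ case together with the mean value property gives $\frac{1}{\pi}\int_0^{2\pi}\operatorname{Re} p\,d\theta=2\operatorname{Re} p(0)=2$. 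Since $\operatorname{Re} p\ge 0$ on $|z|=r<1$, the modulus of the integral is bounded by the integral of $\operatorname{Re} p$ itself, giving $|p_n|r^n\le 2$, and $r\to 1^-$ finishes the bound; working on inner circles correctly sidesteps any boundary-regularity issue. The sharpness check for $(1+z)/(1-z)$ is also right. Your remark about the Herglotz representation is apt: that route is shorter but imports a deeper theorem, whereas your Fourier argument is self-contained, which is the better choice if one insists on proving the lemma at all rather than citing it as the paper does.
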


\subsection{The Bieberbach-type problem}
The Bieberbach-type problem for the class $\mathcal{S}_q$ is investigated and we prove the following result.
\begin{theorem}\label{T2}
If $f(z)=z+\sum_{n=2}^\infty a_nz^n\in \mathcal{S}_q$, then for all $n\ge 2$,
$$|a_n|\le \prod_{j=2}^n\left(\frac{\ds\frac{1-q^{j-1}}{1-q}+1}{\ds\frac{1-q^j}{1-q}-1}\right).
$$
Equality holds for the function $F$ satisfying $z(D_qF)(z)/F(z)=(1+z)/(1-z)$.
\end{theorem}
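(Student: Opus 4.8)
The plan is to translate $f\in\mathcal S_q$ into a recurrence for the Taylor coefficients of $f$ driven by a Carath\'eodory function, bound that function's coefficients by $2$, and then close an induction using an exact telescoping identity satisfied by the claimed product; sharpness comes from the function with constant Carath\'eodory data $p_k\equiv2$.

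First I would note that $f\in\mathcal S_q$ means $\real\big(z(D_qf)(z)/f(z)\big)>0$, and since the quotient equals $1$ at $z=0$ there is a $p(z)=1+\sum_{k\ge1}p_kz^k\in\mathcal P$ with $z(D_qf)(z)=f(z)\,p(z)$. Writing $[n]_q:=(1-q^n)/(1-q)=1+q+\cdots+q^{n-1}$, the definition \eqref{sec1-eqn1} gives $z(D_qf)(z)=\sum_{n\ge1}[n]_q a_n z^n$ with $a_1=1$. Matching the coefficient of $z^n$ on the two sides of $z(D_qf)(z)=f(z)p(z)$ then produces the basic recurrence
$$\big([n]_q-1\big)a_n=\sum_{k=1}^{n-1}a_{n-k}\,p_k,\qquad n\ge2 .$$

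Applying the Carath\'eodory lemma (Lemma~\ref{l6}), $|p_k|\le2$, this yields $\big([n]_q-1\big)|a_n|\le2\sum_{j=1}^{n-1}|a_j|$. I would then set $B_n:=\prod_{j=2}^{n}\big([j-1]_q+1\big)/\big([j]_q-1\big)$ with $B_1:=1$, and prove $|a_n|\le B_n$ by induction; the base case $n=2$ is immediate since $[2]_q-1=q$ forces $|a_2|\le 2/q=B_2$. For the inductive step the linchpin is the consistency identity
$$2\sum_{j=1}^{n-1}B_j=\big([n]_q-1\big)B_n ,$$
which I would obtain by a one-line telescoping: putting $c_n:=\big([n]_q-1\big)B_n$, the recursion for $B_n$ gives $c_n=\big([n-1]_q+1\big)B_{n-1}$, whereas by definition $c_{n-1}=\big([n-1]_q-1\big)B_{n-1}$, so $c_n-c_{n-1}=2B_{n-1}$; since $c_1=0$, summation yields $c_n=2\sum_{j=1}^{n-1}B_j$. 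Granting $|a_j|\le B_j$ for $j<n$, the chain $\big([n]_q-1\big)|a_n|\le2\sum_{j=1}^{n-1}|a_j|\le2\sum_{j=1}^{n-1}B_j=\big([n]_q-1\big)B_n$ closes the induction.

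Finally, for sharpness I would take $F$ with $z(D_qF)(z)/F(z)=(1+z)/(1-z)=1+2\sum_{k\ge1}z^k$, so that $p_k=2$ for all $k$; then the recurrence reads $\big([n]_q-1\big)a_n=2\sum_{j=1}^{n-1}a_j$ with all $a_j>0$, and the same telescoping forces $a_n=B_n$, showing the bound is attained. I expect no deep obstacle here: the only point demanding genuine care is the exact telescoping of the quantities $c_n=\big([n]_q-1\big)B_n$ (whose consecutive differences are forced to equal $2B_{n-1}$), since it is precisely this consistency that makes the Carath\'eodory estimate and the claimed product match term by term, and it is also what degenerates correctly to the classical bound $|a_n|\le n$ as $q\to1^-$.
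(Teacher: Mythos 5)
Your proposal is correct and follows essentially the same route as the paper: the same Carath\'eodory-function recurrence $([n]_q-1)a_n=\sum_{k=1}^{n-1}p_k a_{n-k}$ with $[n]_q=(1-q^n)/(1-q)$, the same bound $|p_k|\le 2$ from Lemma~\ref{l6}, induction on $n$, and sharpness from the same extremal function $F$. The only difference is presentational: where the paper closes the induction by writing out and summing the partial products explicitly, you package the same identity $2\sum_{j=1}^{n-1}B_j=([n]_q-1)B_n$ as a telescoping of $c_n=([n]_q-1)B_n$, which is a cleaner bookkeeping of the identical computation rather than a different argument.
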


\begin{proof}
The proof depends on the the method of induction.
Suppose that $f\in \mathcal{S}_q$. Set
\begin{equation}\label{pf-e1}
p(z):=\frac{z(D_qf)(z)}{f(z)}=1+\sum_{n=1}^\infty p_nz^n.
\end{equation}
Clearly, $p(z)\in \mathcal{P}$. 
From \eqref{pf-e1}, we have
\begin{equation}\label{pf-e2}
z(D_qf)(z)=p(z)f(z).
\end{equation}
By substituting the series for $f(z)$ and $p(z)$ in \eqref{pf-e2}, we have
$$\sum_{n=1}^\infty\frac{1-q^n}{1-q} a_nz^n=\sum_{n=1}^\infty\left(a_n+\sum_{k=1}^{n-1}p_{n-k}a_k\right) z^n.
$$
On comparing coefficient of $z^n$ both sides, we obtain
\begin{equation}\label{e8.5}
\frac{1-q^n}{1-q} a_n=a_n+\sum_{k=1}^{n-1}p_{n-k}a_k, \quad n=1,2,\ldots,
\end{equation}
where $a_1=1$. Lemma~\ref{l6} along with the triangle inequality obtains
\begin{equation}\label{e9}
\left(\frac{1-q^n}{1-q}-1\right)|a_n|\le 2\sum_{k=1}^{n-1}|a_k|.
\end{equation}
Let us assume that the conclusion of the theorem is true for $k=2,3,\ldots, n-1$. 
That is,
\begin{equation}\label{e10}
|a_k|\le \ds \prod_{j=2}^k\left(\frac{\ds\frac{1-q^{j-1}}{1-q}+1}{\ds\frac{1-q^j}{1-q}-1}\right), \quad k=2,3,\ldots,n-1.
\end{equation}
Now our claim is to establish \eqref{e10} for $k=n$. 
Using (\ref{e10}) in (\ref{e9}) we have
\begin{eqnarray*}
\left(\frac{1-q^n}{1-q}-1\right)|a_n|&\le& 2 \left[1+\left(\frac{\ds\frac{1-q}{1-q}+1}{\ds\frac{1-q^2}{1-q}-1}\right)+\left(\frac{\ds\frac{1-q}{1-q}+1}{\ds\frac{1-q^2}{1-q}-1}\right)\left(\frac{\ds\frac{1-q^2}{1-q}+1}{\ds\frac{1-q^3}{1-q}-1}\right)+\ldots \right.\\
&&\left. \hspace*{1cm}+\left(\frac{\ds\frac{1-q}{1-q}+1}{\ds\frac{1-q^2}{1-q}-1}\right)\left(\frac{\ds\frac{1-q^2}{1-q}+1}{\ds\frac{1-q^3}{1-q}-1}\right)\ldots \left(\frac{\ds\frac{1-q^{n-2}}{1-q}+1}{\ds\frac{1-q^{n-1}}{1-q}-1}\right)\right]\\
&=& 2 \left[1+\left(\frac{2}{\ds\frac{1-q^2}{1-q}-1}\right)+\left(\frac{2}{\ds\frac{1-q^2}{1-q}-1}\right)\left(\frac{\ds\frac{1-q^2}{1-q}+1}{\ds\frac{1-q^3}{1-q}-1}\right)+\ldots \right.\\
&&\left. \hspace*{1cm}+\left(\frac{2}{\ds\frac{1-q^2}{1-q}-1}\right)\left(\frac{\ds\frac{1-q^2}{1-q}+1}{\ds\frac{1-q^3}{1-q}-1}\right)\ldots \left(\frac{\ds\frac{1-q^{n-2}}{1-q}+1}{\ds\frac{1-q^{n-1}}{1-q}-1}\right)\right]\\
&=&\frac{2\left[\left(\ds\frac{1-q^2}{1-q}+1\right)\left(\ds\frac{1-q^3}{1-q}+1\right)\ldots \left(\ds\frac{1-q^{n-1}}{1-q}+1\right)\right]}{\left(\ds\frac{1-q^2}{1-q}-1\right)\left(\ds\frac{1-q^3}{1-q}-1\right)\ldots \left(\ds\frac{1-q^{n-1}}{1-q}-1\right)}.
\end{eqnarray*}
Hence
$$|a_n|\le \ds \prod_{j=2}^n\left(\frac{\ds\frac{1-q^{j-1}}{1-q}+1}{\ds\frac{1-q^j}{1-q}-1}\right).
$$
For the proof of the equality part, 
let us assume that $F(z)=z+\sum_{n=1}^\infty b_nz^n$ such that $z(D_qF)(z)/F(z)=(1+z)/(1-z)$. Since the image of the the unit disk under the map $(1+z)/(1-z)$ is the right half plane, $F\in \mathcal{S}_q$. To find the coefficient $b_n$, write
$$z(D_qF)(z)=F(z)(1+z)/(1-z).
$$
By substituting the series representation of $F(z)$, we have
\begin{equation}\label{e11}
\sum_{n=1}^\infty \left(\frac{1-q^n}{1-q}\right)b_nz^n=\left(\sum_{n=1}^\infty b_nz^n\right)\left(1+\sum_{n=1}^\infty 2z^n\right),
\end{equation}
where $b_1=1$. Equation~(\ref{e11}) can be rewritten as
$$\sum_{n=1}^\infty \left(\frac{1-q^n}{1-q}-1\right)b_nz^n=\sum_{n=1}^\infty \left(\sum_{k=1}^{n-1}2b_k\right)z^n.
$$
Equating the coefficient of $z^n$ both sides, we have
$$\left(\frac{1-q^n}{1-q}-1\right)b_n= 2\sum_{k=1}^{n-1}b_k, \quad n=2,3,\ldots .
$$
Simple calculation leads to the conclusion that
$$b_n=\prod_{j=2}^n\left(\frac{\ds\frac{1-q^{j-1}}{1-q}+1}{\ds\frac{1-q^j}{1-q}-1}\right).
$$

This completes the proof of our theorem.
\end{proof}

\subsection{The Fekete-Szeg\"o problem}
The Fekete-Szeg\"o problem for the class $\mathcal{S}_q$ is obtained as follows:
\begin{theorem}\label{T3}
Let $f\in\mathcal{S}_q$ be of the form (\ref{e1}) and $\mu$ be any complex number. Then
$$|a_3-\mu a_2^2|\le \max\left\{\left|\frac{2(2+q)-4\mu(1+q)}{q^2(1+q)}\right|, \frac{2}{q(1+q)}\right\}.
$$
Equality occurs for the functions $F$ and $G$ satisfying
$$\frac{z(D_qF)(z)}{F(z)}=\frac{1+z}{1-z}
$$
and
\begin{equation}\label{e3}
\frac{z(D_q G)(z)}{G(z)}=\frac{1+z^2}{1-z^2}.
\end{equation}
\end{theorem}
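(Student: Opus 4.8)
The plan is to follow the coefficient-extraction strategy already used in the proof of Theorem~\ref{T2}. Writing $p(z)=z(D_qf)(z)/f(z)=1+\sum_{n\ge 1}p_nz^n$, the hypothesis $f\in\mathcal{S}_q$ gives $p\in\mathcal{P}$. Reading off the recurrence \eqref{e8.5} for $n=2$ and $n=3$ (using $\frac{1-q^2}{1-q}=1+q$ and $\frac{1-q^3}{1-q}=1+q+q^2$), I would solve for the first two coefficients,
$$
a_2=\frac{p_1}{q},\qquad a_3=\frac{p_2}{q(1+q)}+\frac{p_1^2}{q^2(1+q)}.
$$
With these in hand, a direct computation collapses the functional to the classical Fekete-Szeg\"o form
$$
a_3-\mu a_2^2=\frac{1}{q(1+q)}\bigl(p_2-\lambda p_1^2\bigr),\qquad \lambda=\frac{\mu(1+q)-1}{q},
$$
so the entire problem reduces to estimating $|p_2-\lambda p_1^2|$ over the class $\mathcal{P}$.

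The main point, and the obstacle, is that $\mu$ and hence $\lambda$ is \emph{complex}, so Lemma~\ref{l5} (stated only for real $\lambda$) does not apply verbatim. I would instead invoke Lemma~\ref{l4}. Since $\mathcal{P}$ is invariant under the rotation $p(z)\mapsto p(e^{i\theta}z)$, which sends $p_n\mapsto e^{in\theta}p_n$ and therefore multiplies $p_2-\lambda p_1^2$ by the unimodular factor $e^{2i\theta}$, one may assume without loss of generality that $p_1\in[0,2]$. Substituting $2p_2=p_1^2+x(4-p_1^2)$ from Lemma~\ref{l4} gives
$$
p_2-\lambda p_1^2=\Bigl(\tfrac12-\lambda\Bigr)p_1^2+\tfrac12\,x(4-p_1^2),
$$
and the triangle inequality together with $|x|\le 1$ and $4-p_1^2\ge 0$ reduces the estimate to maximizing the affine function $t\mapsto\bigl(|\tfrac12-\lambda|-\tfrac12\bigr)t+2$ of the variable $t=p_1^2\in[0,4]$. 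The maximum equals $2$ when $|2\lambda-1|\le 1$ and equals $2|2\lambda-1|$ when $|2\lambda-1|\ge 1$; that is, $|p_2-\lambda p_1^2|\le 2\max\{1,|2\lambda-1|\}$, the complex analogue of Lemma~\ref{l5}.

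Finally, substituting $2\lambda-1=\bigl(2\mu(1+q)-(2+q)\bigr)/q$ and multiplying through by $1/(q(1+q))$ produces exactly the asserted bound
$$
|a_3-\mu a_2^2|\le\max\left\{\left|\frac{2(2+q)-4\mu(1+q)}{q^2(1+q)}\right|,\ \frac{2}{q(1+q)}\right\}.
$$
For sharpness I would examine the two extremal cases separately: the function $F$ with $p(z)=(1+z)/(1-z)$ has $p_1=p_2=2$, so $|p_2-\lambda p_1^2|=2|2\lambda-1|$, realizing the first term when $|2\lambda-1|\ge 1$; and the function $G$ of \eqref{e3} with $p(z)=(1+z^2)/(1-z^2)$ has $p_1=0$, $p_2=2$, giving $|p_2-\lambda p_1^2|=2$ and hence the constant term $2/(q(1+q))$. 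Verifying these two evaluations completes the proof. Apart from the need to promote Lemma~\ref{l5} to complex $\lambda$ (handled cleanly by the rotation plus Lemma~\ref{l4}), every step is a routine computation.
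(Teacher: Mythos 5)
Your proposal is correct, and the reduction is the same as the paper's: extract $a_2=p_1/q$ and $a_3=(qp_2+p_1^2)/(q^2(1+q))$ from the recurrence \eqref{e8.5}, collapse the functional to $\frac{1}{q(1+q)}\bigl|p_2-\lambda p_1^2\bigr|$ with $\lambda=(\mu(1+q)-1)/q$, bound that quantity by $2\max\{1,|2\lambda-1|\}$, and verify sharpness with $F$ and $G$ exactly as you do. The one genuine difference is the key lemma: the paper simply invokes Lemma~\ref{l5} (stated for \emph{real} $\lambda$) even though $\mu$, and hence $\lambda$, is allowed to be complex, whereas you notice this mismatch and re-derive the complex analogue from Lemma~\ref{l4}, using rotation invariance of $\mathcal{P}$ to normalize $p_1\in[0,2]$ and then maximizing the resulting affine function of $t=p_1^2\in[0,4]$. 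Your route is slightly longer but self-contained and rigorous for complex $\mu$; in effect it repairs a small gap in the paper's own argument (the inequality $|p_2-\lambda p_1^2|\le 2\max\{1,|2\lambda-1|\}$ does hold for complex $\lambda$, but that is not what the cited lemma states). Everything else, including both sharpness computations ($p_1=p_2=2$ for $F$ giving $2|2\lambda-1|$, and $p_1=0$, $p_2=2$ for $G$ giving the constant $2/(q(1+q))$), matches the paper.
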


\begin{proof}
Let $f\in \mathcal{S}_q$. By (\ref{e8.5}), we get
$$a_2=\frac{p_1}{q} \quad \mbox{ and } \quad a_3=\frac{qp_2+p_1^2}{q^2(1+q)}
$$
\begin{eqnarray*}
|a_3-\mu a_2^2|&=&\left|\frac{qp_2+p_1^2}{q^2(1+q)}-\mu \frac{p_1^2}{q^2}\right|\\
&=&\frac{1}{q(1+q)}\left|p_2-\frac{\mu(1+q)-1}{q}p_1^2\right|.
\end{eqnarray*}
We now apply Lemma~\ref{l5}, to get
$$|a_3-\mu a_2^2|\le \frac{2}{q(1+q)}\max\left\{\left|\frac{2\mu(1+q)-(2+q)}{q}\right|, 1\right\}.
$$
This completes the proof of the first part. It now remains to prove the sharpness part.
For the function $F$ defined in the statement of the theorem, it follows from the equality part of the Theorem~\ref{T2} that the n-th coefficient 
$$b_n=\prod_{j=2}^n\left(\frac{\ds\frac{1-q^{j-1}}{1-q}+1}{\ds\frac{1-q^j}{1-q}-1}\right).
$$
So, we get 
$$b_2=\frac{2}{q} \quad \mbox{ and } \quad b_3=\frac{2(2+q)}{q^2(1+q)}.
$$
Therefore,
$$|b_3-\mu b_2^2|=\left|\frac{2(2+q)-4\mu(1+q)}{q^2(1+q)}\right|.
$$
Again, it is clear that the function $G$ defined in the theorem is in the class $\mathcal{S}_q$. Also it is easy to show that the second coefficient is zero, whereas the third coefficient is $2/q(1+q)$. Hence the conclusion follows. 
\end{proof}

\begin{remark}
For $q\to 1$, Theorem~\ref{T3} gives the Fekete-Szeg\"o problem
for the class $\mathcal{S}^*$ \cite[Theorem~1]{KM69}.
\end{remark}

\subsection{The Hankel determinant}
The next result is to get an estimation for second order Hankel determinant for the class $\mathcal{S}_q$.

\begin{theorem}\label{T4}
Let $f\in\mathcal{S}_q$ be of the form (\ref{e1}). Then
$$
|H_2(2)|=|a_2a_4-a_3^2|\le \frac{4}{q^2(1+q)^2}.
$$
Equality occurs for the function $G(z)$ defined in $(\ref{e3})$.
\end{theorem}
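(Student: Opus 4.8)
The plan is to follow the route used for Theorem~\ref{T3}: express the first few coefficients of $f$ through those of the Carath\'eodory function $p$, and then reduce the estimate to a two-variable extremal problem handled by Lemma~\ref{l4}. Writing $p(z)=z(D_qf)(z)/f(z)=1+\sum_{n\ge1}p_nz^n\in\mathcal{P}$ and solving the recurrence \eqref{e8.5} for $n=2,3,4$, I would obtain $a_2=p_1/q$, $a_3=(qp_2+p_1^2)/(q^2(1+q))$, together with an explicit expression for $a_4$ in $p_1,p_2,p_3$. Substituting into $H_2(2)=a_2a_4-a_3^2$ and clearing denominators, I expect the Hankel determinant to collapse to
$$H_2(2)=\frac{\Lambda}{q^2(1+q)^2(1+q+q^2)},\qquad \Lambda:=(1+q)^2p_1p_3-(1+q+q^2)p_2^2+(1-q)p_1^2p_2-p_1^4,$$
so that the whole problem becomes the sharp estimation of $|\Lambda|$.

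Next I would use Lemma~\ref{l4} to normalise $p_1=c\in[0,2]$ and to write $p_2$ and $p_3$ through $c$ and two parameters $x,z$ with $|x|\le1$, $|z|\le1$. Setting $\gamma=4-c^2$ and substituting, the quartic and cubic parts of $\Lambda$ reorganise cleanly, and the triangle inequality (together with $|z|\le1$ and $1-|x|^2\ge0$) yields, with $t=|x|\in[0,1]$, a majorant
$$4|\Lambda|\le M(c,t):=(2+q)c^4+2c^2\gamma\,t+\gamma\big[(1+q)^2c^2+(1+q+q^2)\gamma\big]t^2+2(1+q)^2c\gamma\,(1-t^2).$$
Thus it suffices to maximise $M$ over the rectangle $[0,2]\times[0,1]$, and one should note that at $c=2$ all $\gamma$-terms vanish, so the triangle inequality is tight there and the final bound will be sharp.

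The optimisation in $t$ is the pleasant part. As a quadratic in $t$, the coefficient of $t$ is $2c^2\gamma\ge0$, while the net coefficient of $t^2$ (after accounting for the $1-t^2$ term) factors as $(2-c)^2(2+c)\big[2(1+q+q^2)-qc\big]$, which is non-negative on $[0,2]$ since $qc\le2q<2(1+q+q^2)$. Hence $M(c,\cdot)$ increases on $[0,1]$ and its maximum is at $t=1$, where a fortunate cancellation of the quartic terms leaves the strikingly simple
$$M(c,1)=4(1-q^2)c^2+16(1+q+q^2).$$

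The remaining optimisation over $c$ is the main obstacle, and it is exactly where care is required. Since $q\in(0,1)$ gives $1-q^2>0$, the function $M(c,1)$ is strictly increasing in $c$, so its maximum is attained at the endpoint $c=2$, i.e. at $p_1=2$, the Carath\'eodory extremal $p(z)=(1+z)/(1-z)$, which corresponds to the function $F$ of Theorem~\ref{T2}, and not at $c=0$, which corresponds to $G$. This gives $\max 4|\Lambda|=M(2,1)=16(2+q)$ and the sharp estimate
$$|H_2(2)|\le\frac{4(2+q)}{q^2(1+q)^2(1+q+q^2)},$$
with equality for $F$. I would therefore re-examine the stated value $4/(q^2(1+q)^2)$: evaluating $\Lambda$ directly for $G$ (where $p_1=p_3=0$, $p_2=2$) does give $|H_2(2)|=4/(q^2(1+q)^2)$, but for every $q\in(0,1)$ this is strictly smaller than the value realised by $F$, the two agreeing only as $q\to1^-$. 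So the crux is pinning down the extremal $p_1$, and carrying the plan through points to $F$, rather than $G$, as the extremal function, together with the larger bound displayed above.
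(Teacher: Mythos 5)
Your proposal follows exactly the paper's own route --- the coefficient formulas from \eqref{e8.5}, the reduction to $\Lambda=(1+q)^2p_1p_3-(1+q+q^2)p_2^2+(1-q)p_1^2p_2-p_1^4$, Lemma~\ref{l4}, the triangle inequality, and the two-step maximization in $t=|x|$ and then in $c=p_1$ --- and your algebra is correct at every stage: the net $t^2$-coefficient does factor as $(2-c)^2(2+c)\bigl[2(1+q+q^2)-qc\bigr]\ge 0$, the quartic terms do cancel at $t=1$, and $M(c,1)=4(1-q^2)c^2+16(1+q+q^2)$ is strictly increasing on $[0,2]$. What you have found is a genuine error in the paper's proof and statement. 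After fixing $\rho=1$, the paper locates the interior critical points $c=0$ and $c^2=1-q$ of $g(c)=h(1)$ and declares the maximum to be at $c=0$; it never evaluates $g$ at the endpoint $c=2$. But $g(2)=\frac{4(2+q)}{q^2(1+q)^2(1+q+q^2)}$ and $g(2)/g(0)=(2+q)/(1+q+q^2)>1$ for all $q\in(0,1)$ --- and this holds even for the paper's own $g$, which differs from the correct majorant by a typo (the first-power-of-$x$ term should carry $2q^2$, not $2q$, in its denominator; the interior local maximum at $c=0$ is an artifact of that typo, while the endpoint comparison is unaffected because the discrepant term vanishes at $c=2$). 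Concretely, the function $F$ of Theorem~\ref{T2} --- which the paper asserts lies in $\mathcal{S}_q$ and uses as its extremal in Theorems~\ref{T2} and~\ref{T3} --- has $b_2=2/q$, $b_3=\frac{2(2+q)}{q^2(1+q)}$, $b_4=\frac{2(2+q)(2+q+q^2)}{q^3(1+q)(1+q+q^2)}$, hence $|b_2b_4-b_3^2|=\frac{4(2+q)}{q^2(1+q)^2(1+q+q^2)}>\frac{4}{q^2(1+q)^2}$. So Theorem~\ref{T4} as stated is inconsistent with Theorem~\ref{T2}; your larger constant is the correct output of this method, and the two constants coincide only as $q\to1^-$ (both reduce to Janteng's classical bound $1$), which is why the error is invisible in the limit.

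One caveat, which affects your sharpness claim exactly as much as it affects the paper's: strictly speaking neither $F$ nor $G$ belongs to $\mathcal{S}_q$, since membership in $\mathcal{A}$ requires analyticity in all of $\D$. The relation $z(D_qF)(z)/F(z)=(1+z)/(1-z)$ is equivalent to $F(qz)=F(z)\,\frac{q-(2-q)z}{1-z}$, which forces a singularity at $z=q/(2-q)\in(0,1)$; equivalently, $b_n$ grows like $((2-q)/q)^n$, so the series for $F$ has radius of convergence $q/(2-q)<1$, and $G$ has the same defect at $|z|=\sqrt{q/(2-q)}$. Indeed, $f\in\mathcal{S}_q$ forces $p=z(D_qf)/f$ to omit the value $1/(1-q)$, a value attained by both $(1+z)/(1-z)$ and $(1+z^2)/(1-z^2)$. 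What your argument rigorously establishes, therefore, is that $\frac{4(2+q)}{q^2(1+q)^2(1+q+q^2)}$ is the sharp bound for $|\Lambda|$ over all Carath\'eodory coefficient data, hence a valid upper bound for $|H_2(2)|$ on $\mathcal{S}_q$, and that the paper's smaller constant cannot be an upper bound under the paper's own reading of its class; pinning down the exact supremum over $\mathcal{S}_q$ proper (where $p$ must omit $1/(1-q)$) is a further problem that neither you nor the paper addresses. Modulo this shared caveat, your diagnosis of where the paper's proof fails and your corrected bound are right.
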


\begin{proof}
Let $f\in \mathcal{S}_q$. By (\ref{e8.5}), we get
$$a_2=\frac{p_1}{q}, a_3=\frac{qp_2+p_1^2}{q^2(1+q)}, \mbox{ and } a_4=\frac{p_3q^2(1+q)+p_1p_2q(2+q)+p_1^3}{q^3(1+q)(1+q+q^2)}.
$$

Hence,
\begin{eqnarray*}
|a_2a_4-a_3^2|&=&\left|\frac{p_1p_3q^2(1+q)+p_1^2p_2q(2+q)+p_1^4}{q^4(1+q)(1+q+q^2)}-\frac{q^2p_2^2+2qp_1^2p_2+p_1^4}{q^4(1+q)^2}\right|.\\
&=&\left|\frac{p_1p_3}{q^2(1+q+q^2)}+\frac{(1-q)p_1^2p_2}{q^2(1+q)^2(1+q+q^2)}-\frac{p_1^4}{q^2(1+q)^2(1+q+q^2)}\right.\\
&&\left. \hspace*{9.5cm} -\frac{p_2^2}{q^2(1+q)^2}\right|
\end{eqnarray*}

Suppose now that $p_1=c$ and $0\le c\le 2$. Using Lemma~\ref{l4}, we obtain
\begin{eqnarray*}
|a_2a_4-a_3^2|&=&\left|\frac{c(c^3+2(4-c^2)cx-c(4-c^2)x^2+2(4-c^2)(1-|x|^2)z)}{4q^2(1+q+q^2)}\right.\\
&&\left. +\frac{(1-q)c^2(c^2+x(4-c^2))}{2q^2(1+q)^2(1+q+q^2)}-\frac{c^4}{q^2(1+q)^2(1+q+q^2)}-\frac{(c^2+x(4-c^2))^2}{4q^2(1+q)^2}\right|.\\
\end{eqnarray*}
Simplification yields,
\begin{eqnarray*}
|a_2a_4-a_3^2|&=&\left|\frac{(4-c^2)c^2x}{2q(1+q)^2(1+q+q^2)}+\frac{(4-c^2)(1-|x|^2)cz}{2q^2(1+q+q^2)}-\frac{(2+q)c^4}{4q^2(1+q)^2(1+q+q^2)}\right.\\
&&\left. \hspace*{6.5cm}-\frac{(4-c^2)x^2(qc^2+4(1+q+q^2))}{4q^2(1+q)^2(1+q+q^2)}\right|.
\end{eqnarray*}
Triangle inequality with $|z|\le 1$ and $\rho=|x|\le 1$ gives 
\begin{eqnarray*}
|a_2a_4-a_3^2|&\le& \left[\frac{(4-c^2)c^2\rho}{2q(1+q)^2(1+q+q^2)}+\frac{(4-c^2)(1-\rho^2)c}{2q^2(1+q+q^2)}+\frac{(2+q)c^4}{4q^2(1+q)^2(1+q+q^2)}\right.\\
&&\left. \hspace*{6.5cm}+\frac{(4-c^2)\rho^2(qc^2+4(1+q+q^2))}{4q^2(1+q)^2(1+q+q^2)}\right]\\
&\le& \left[\frac{(2+q)c^4}{4q^2(1+q)^2(1+q+q^2)}+\frac{(4-c^2)c}{2q^2(1+q+q^2)}\frac{(4-c^2)c^2\rho}{2q(1+q)^2(1+q+q^2)}\right.\\
&&\left. \hspace*{6cm}+\frac{(4-c^2)(c-2)(qc-2(1+q+q^2))\rho^2}{4q^2(1+q)^2(1+q+q^2)}\right]\\
&=& h(\rho).
\end{eqnarray*}
Furthermore, we can see that $h'(\rho)\ge 0$. This implies that $h$ is an increasing function in $\rho$ and thus the upper bound for $|a_2a_4-a_3^2|$ corresponds to the value obtained at $\rho=1$.
Hence,
$$|a_2a_4-a_3^2|\le h(1)=g(c)\, \mbox{ (say)}.
$$
Differentiation of $g$ with respect to $c$ yields,
$$g'(c)=\frac{2(1-q)c(c^2-(1-q))}{q^2(1+q)^2(1+q+q^2)}
$$
The expression $g'(c)=0$ gives either $c=0$ or $c^2=1-q$.
It can easily be verified that $g''(c)$ is negative for $c=0$ and positive for other values of $c$. Hence the maximum of $g(c)$ occurs at $c=0$. Thus, we obtain
$$
|a_2a_4-a_3^2|\le \frac{4}{q^2(1+q^2)}.
$$
The function $G$ defined in the statement of the theorem shows the sharpness of the result. 
This completes the proof of the theorem.
\end{proof}

\begin{remark}
For $q\to 1$, Theorem~\ref{T4} gives an estimation for the Hankel determinant
for the class $\mathcal{S}^*$ \cite[Theorem~3.1]{Jan07}.
\end{remark}

\vskip 1cm
\noindent
{\bf Acknowledgements.}
The author would like to thank Dr. S. K. Sahoo, Indian Institute of Technology Indore, for the useful discussions on the topic. This work has been carried out when the author was a visiting scientist at Indian Statistical Institute Chennai.

\end{document}